\newtheorem{theorem}{\sc Theorem}[section]
\newtheorem{lemma}[theorem]{\sc Lemma}
\newtheorem{proposition}[theorem]{\sc Proposition}
\newtheorem{example}[theorem]{\sc Example}
\newcommand{\al }{\alpha }
\title[Bounded conjugacy classes]{On finite groups with bounded conjugacy classes of commutators}
\author[D. Senise]{D\'ebora Senise} 
\address{D\'ebora Senise: Department of Mathematics, University of Brasilia, Brasilia, DF, Brazil}
\email{deborasenise2502@gmail.com}
\author[P. Shumyatsky]{Pavel Shumyatsky} 
\address{Pavel Shumyatsky: Department of Mathematics, University of Brasilia, Brasilia, DF, Brazil}
\email{pavel@unb.br}
\thanks{The work of the second author was supported by FAPDF and CNPq}
\keywords{Finite groups, commutators, centralizers}
\subjclass[2020]{ 20D25, 20E45, 20F12}
\begin{document}

\maketitle

\begin{abstract}
 In 1954 B. H. Neumann discovered that if $G$ is a group in which all conjugacy classes have finite cardinality at most $m$, then the derived group $G'$ is finite of $m$-bounded order. In 2018 G. Dierings and P. Shumyatsky showed that if $|x^G| \le m$ for any commutator $x\in G$, then the second derived group $G''$ is finite and has $m$-bounded order. This paper deals with finite groups in which $|x^G|\le m$ whenever $x\in G$ is a commutator of prime power order. The main result is that $G''$ has $m$-bounded order.
\end{abstract}

\section{Introduction}

Given a group $G$ and an element $x\in G$, we write $x^G$ for the conjugacy class containing $x$. Of course, if the number of elements in $x^G$ is finite, we have $|x^G|= [G : C_G (x)]$. B. H. Neumann discovered that if all conjugacy classes in a group $G$ are finite of size at most $m$, then the order of the commutator subgroup $G'$ is finite and bounded in terms of $m$ only \cite{neumann}. A first explicit bound for the order of $G'$ was found by J. Wiegold \cite{wie}, and the best known was obtained in \cite{gura} (see also \cite{neuvl,sesha}).

There are several recent publications where Neumann's theorem has been extended in various directions (see in particular \cite{as,dms,dieshu}). Here we are interested in groups $G$ such that $|x^G|\leq m$ whenever $x$ is a commutator, that is, there are elements $a,b\in G$ for which $x=a^{-1}b^{-1}ab$. The following results are known.
\begin{theorem}\label{known}
Let $m$ be a positive integer and $G$ a group in which $|x^G|\leq m$ whenever $x$ is a commutator. Then we have
\begin{enumerate}
\item The second commutator subgroup $G''$ is finite of $m$-bounded order.
\item $G$ has a normal subgroup $H$ such that the index $[G:H]$ and the order of $\gamma_4(H)$ are both $m$-bounded.
\end{enumerate}
\end{theorem}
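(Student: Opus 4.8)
The plan is to get $G''$ finite first and only then to force the uniform bound, which is the real content; part (2) then follows by passing to $G/G''$. Since $G''$ is generated by commutators of commutators, and each such element involves only finitely many elements of $G$, it suffices to bound $|H''|$ uniformly in $m$ over the finitely generated subgroups $H\le G$ (which inherit the hypothesis). So assume $G=\langle g_1,\dots,g_n\rangle$. Then $G'$ is generated by the $G$-conjugacy classes of the $[g_i,g_j]$, each of size at most $m$, so $G'$ is finitely generated, say $G'=\langle h_1,\dots,h_t\rangle$ with every $h_\ell$ a commutator; hence $C_G(G')=\bigcap_\ell C_G(h_\ell)$ has finite index in $G$, so $G'/Z(G')$ is finite and Schur's theorem gives $G''$ finite. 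Feeding the hypothesis in once more ($G''$ is generated by the $G'$-classes of the $[h_i,h_j]$, each of size at most $m$) shows $G''$ is even finitely generated.

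The bounds just obtained depend on $n$ and $t$, not on $m$ alone, and repairing this is the step I expect to be the main obstacle. One cannot hope to bound $[G':Z(G')]$ in terms of $m$: letting a finite cyclic group act on an infinite extraspecial $p$-group without nontrivial fixed points (fixing the centre) gives groups satisfying the hypothesis with $[G':Z(G')]$ infinite, so the bound must come from $G''$ directly. The ingredients I would assemble are: (i) for each commutator $k$ the subgroup $[G',k]=\langle\, n^{-1}n^{k}:n\in G'\,\rangle$ is generated by at most $|k^{G'}|\le m$ elements, each of them a commutator of $G$; (ii) $G''$ is generated by these subgroups $[G',k]$ as $k$ ranges over the commutators; and (iii) the elementary observation that a group generated by $r$ elements of conjugacy-class size at most $m$ has its centre of index at most $m^{r}$ (and so, by Schur, bounded derived group). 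The heart of the matter is a collection argument showing that $G''$ is already generated by $[G',k_1],\dots,[G',k_N]$ for an $m$-bounded number $N$ of commutators $k_i$ — equivalently, that greedily enlarging a subgroup by adjoining new subgroups of the form $[G',k]$ stabilises after $m$-boundedly many steps. Once this is in place, $G''$ is generated by $m$-boundedly many elements of class size at most $m$, so $|G''/Z(G'')|$ is $m$-bounded by (iii), and one finishes by bounding $Z(G'')$ using the finiteness already in hand and the same finitely many controlling centralizers. Making the greedy process terminate in an $m$-bounded number of steps is where the genuine difficulty lies, and is presumably the core of the cited proof.

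For part (2), use part (1) to pass to the metabelian quotient $\bar G=G/G''$: commutators in $\bar G$ still have class size at most $m$, and $\bar G'$ is abelian. Then $\gamma_4(\bar G)=[\gamma_3(\bar G),\bar G]$ is generated by elements $[c,\bar g]$ with $c$ a commutator lying in the abelian group $\gamma_3(\bar G)\le\bar G'$, and for each such $c$ the translate $c^{\bar G}$ has at most $m$ elements. A collection argument of the same flavour selects $m$-boundedly many commutators $c_1,\dots,c_M$ of this kind whose centralizers control all of them, and then $\bar H=\bigcap_j C_{\bar G}(c_j)$ has $m$-bounded index in $\bar G$ and satisfies $[\gamma_3(\bar G),\bar H]=1$, hence $\gamma_4(\bar H)=1$. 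Taking $H\le G$ to be the full preimage of $\bar H$ gives $[G:H]$ $m$-bounded, $G''\le H$, and $\gamma_4(H)G''/G''=\gamma_4(\bar H)=1$, so $\gamma_4(H)\le G''$ and $|\gamma_4(H)|\le|G''|$ is $m$-bounded by part (1). Here too the crux is that it is the conjugacy-class hypothesis, not mere finiteness, that lets the collection argument close up with $m$-boundedly many centralizers.
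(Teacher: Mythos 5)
The paper does not actually prove Theorem~\ref{known}: part (1) is quoted from \cite{dieshu} and part (2) from \cite{ebeshu}, so there is no in-text proof to compare against and your attempt has to stand on its own. What you have genuinely established is correct but is only the easy half: the reduction to finitely generated subgroups is valid (the hypothesis passes to subgroups, $G''$ is the directed union of the $H''$, and a uniform bound on these would transfer), and for $G=\langle g_1,\dots,g_n\rangle$ your argument via the normal closure of the $[g_i,g_j]$, the centralizer $C_G(G')$ and Schur's theorem does give that $G''$ is finite. But the resulting bound depends on $n$, and the entire content of part (1) is the bound in terms of $m$ alone. That step you explicitly leave unproved (``presumably the core of the cited proof''), and the mechanism you propose for it does not work as described: the greedy process of adjoining subgroups $[G',k]$, each normal in $G'$ of $m$-bounded order, is only guaranteed to terminate in boundedly many steps if one already has an a priori bound on $|G''|$ --- a product of arbitrarily many normal subgroups of order $\le d$ can be arbitrarily large (an infinite elementary abelian group is such a product), so the argument is circular exactly at the point where the theorem's content lies. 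The actual proof in \cite{dieshu} replaces this by a genuinely different combinatorial argument; naming the gap and gesturing at ``a collection argument'' does not fill it.

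Part (2) is in worse shape. It is not a soft consequence of part (1): after passing to $\bar G=G/G''$ you need boundedly many commutators $c_1,\dots,c_M\in\gamma_3(\bar G)$ with $\bigcap_j C_{\bar G}(c_j)\le C_{\bar G}(\gamma_3(\bar G))$, and there is no a priori reason such a finite controlling family exists: $C_{\bar G}(\gamma_3(\bar G))$ is an intersection of possibly infinitely many subgroups of index at most $m$ and such an intersection can have infinite index. This is precisely the main theorem of \cite{ebeshu}, a substantial paper in its own right, so asserting ``a collection argument of the same flavour'' here substantially underestimates the difficulty. In short: the finiteness statements and the formal reductions in your write-up are sound, but both of the two places you flag as ``the crux'' are genuine gaps, and in part (1) the specific repair you sketch (greedy termination) would fail.
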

Throughout, we say that a quantity is $(a,b,c\dots)$-bounded to mean that it is bounded by a constant depending only on the parameters $a,b,c\dots$.

Part 1 of the above theorem was established in \cite{dieshu}. Part 2 is immediate from the main result of \cite{ebeshu}.

In the case of finite groups, Neumann's theorem admits an interesting generalisation:\medskip

\begin{proposition} \label{easy}
    Let $m$ be a positive integer and $G$ a finite group in which $|x^G|\leq m$ whenever $x$ is an element of prime power order. Then $G'$ has $m$-bounded order.
\end{proposition}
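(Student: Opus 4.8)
The plan is to argue by induction on $|G|$, the case $|G|=1$ being vacuous. Before the inductive step I would record two routine facts: the hypothesis passes to subgroups trivially, and it passes to a quotient $G/N$ because every prime-power-order element of $G/N$ is the image of a prime-power-order element of $G$ (replace any preimage by its $p$-part, where $p$ is the relevant prime), and the image of a conjugacy class is no larger than the class itself.

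For the inductive step I would fix a minimal normal subgroup $N$ of $G$. Since $G'\cap N$ is normal in $G$ and contained in $N$, minimality forces either $G'\cap N=1$ or $N\le G'$. In the first case $G'\cong G'N/N=(G/N)'$, and as $G/N$ satisfies the hypothesis and has smaller order, induction finishes immediately. So the real content is the case $N\le G'$, and there it suffices to bound $|N|$ in terms of $m$: once that is done, $G/N$ again satisfies the hypothesis, induction bounds $|(G/N)'|=|G'/N|$, and $|G'|=|N|\cdot|G'/N|$ is $m$-bounded.

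To bound $|N|$ when $N\le G'$ I would split according to whether $N$ is abelian. If $N$ is non-abelian, write $N=T_1\times\cdots\times T_k$ with the $T_i$ isomorphic non-abelian simple groups permuted transitively by $G$; choosing $1\ne y\in T_1$ of prime power order, the conjugates of $y$ that move the support through the $k$ factors are distinct, so $k\le|y^G|\le m$, while $C_{T_1}(y)$ is core-free in $T_1$, so $T_1$ embeds into the symmetric group on $|y^{T_1}|\le m$ points and $|T_1|\le m!$; hence $|N|\le (m!)^m$. If $N$ is abelian, it is an elementary abelian $p$-group on which $G$ acts irreducibly, so for $0\ne y\in N$ the orbit $y^G$ spans $N$ over $\mathbb{F}_p$ and $\dim_{\mathbb{F}_p}N\le|y^G|\le m$; it then remains only to bound $p$. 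This last point is the one that needs an idea: if $p>m$ then every element of a Sylow $p$-subgroup $P$ has a conjugacy class of $p$-power size that is $<p$, hence of size $1$, so $P$ is abelian; consequently $N_G(P)/C_G(P)$ is a $p'$-group, and if it were nontrivial a suitable prime-power-order element $g\in N_G(P)$ would act nontrivially on $P$, and coprime action would give $[P:C_P(g)]=|[P,g]|\ge p$, contradicting $[P:C_P(g)]\le|g^G|\le m$; thus $N_G(P)=C_G(P)$, Burnside's normal $p$-complement theorem gives a normal $p$-complement $K$ with $G/K\cong P$ abelian, so $G'\le K$, contradicting $1\ne N\le G'$ with $N$ a $p$-group. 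Hence $p\le m$ and $|N|=p^{\dim_{\mathbb{F}_p}N}\le m^m$.

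The main obstacle is precisely that final step — showing the prime $p$ is $m$-bounded when an abelian minimal normal subgroup lies inside $G'$ — which is handled by the observation that Sylow $p$-subgroups for $p>m$ are abelian together with Burnside's normal $p$-complement theorem; the rest is bookkeeping with the induction and standard facts about minimal normal subgroups and finite simple groups. (It is worth noting that this line of argument uses Burnside's transfer theorem rather than Neumann's theorem itself.)
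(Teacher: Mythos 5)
Your local steps are all correct: the hypothesis does pass to quotients via $p$-parts, a non-abelian minimal normal subgroup has order at most $(m!)^m$, an abelian one is elementary abelian of rank at most $m$, and the Burnside-transfer argument showing $p\le m$ whenever a $p$-group $N$ lies inside $G'$ is sound. The gap is in the inductive assembly. An induction on $|G|$ must establish the \emph{same} bound $f(m)$ at every level; your inductive hypothesis gives $|(G/N)'|\le f(m)$, and the step $|G'|=|N|\cdot|G'/N|$ then yields only $|G'|\le |N|\,f(m)$, which exceeds $f(m)$ whenever $N\ne 1$, so the induction does not close. What the argument actually proves is that every chief factor of $G$ contained in $G'$ has $m$-bounded order; since the number of such factors is not a priori bounded in terms of $m$, no bound on $|G'|$ follows. (The logical shape of the failure is the same as trying to deduce ``$|G'|$ is bounded'' from ``every chief factor of $G$ below $G'$ has order $2$,'' which is false for dihedral $2$-groups.) This is why the paper makes sure that any induction it runs is on a quantity that is itself already $m$-bounded.

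The missing ingredient is some BFC-type theorem converting classwise information into a uniform global bound, and that is exactly the role Neumann's theorem plays in the paper's proof: there one shows that for $p>m$ the normal core of $C_G(y)$, for $y$ of prime power order, has index dividing $m!$ and hence contains every Sylow $p$-subgroup, so Sylow $p$-subgroups for $p>m$ are \emph{central}; writing an arbitrary $g$ as the product of its primary parts then gives $|g^G|\le m^m$ for every $g\in G$, and Neumann's theorem finishes. Your observation that for $p>m$ a Sylow $p$-subgroup $P$ is abelian with $N_G(P)=C_G(P)$ is in the same spirit but strictly weaker than $P\le Z(G)$. If you upgrade it to centrality via the normal-core argument, you can complete the proof along the paper's lines; as written, the final appeal to induction on $|G|$ is the step that fails.
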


A proof of the above statement is given in the next section. In this article we examine the question whether, in the case of finite groups, Theorem \ref{known} can be generalised in a similar manner. 

More precisely, we concentrate on finite groups $G$ in which $|x^G|\leq m$ whenever $x$ is a commutator of prime power order. 
We show by means of examples that the Fitting subgroup of such a group can have arbitrarily large index. Thus, Part 2 of Theorem \ref{known} cannot be extended to the groups in question. 

The main purpose of this paper is to show that in the case of Part 1 the answer to the above question is positive. We establish the following theorem.

\begin{theorem}\label{main}
Let $m$ be a positive integer and $G$ a finite group in which $|x^G|\leq m$ whenever $x$ is a commutator of prime power order. Then $G''$ has $m$-bounded order.
\end{theorem}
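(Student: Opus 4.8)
The plan is to follow the scheme that Dierings and Shumyatsky used in \cite{dieshu} to prove Theorem \ref{known}(1), adapting it to the weaker hypothesis available here. The single most useful point is a local one: if $P$ is a normal $p$-subgroup of a subgroup $H$ of $G$ and $x\in P$, $h\in H$, then the commutator $[x,h]=x^{-1}x^{h}$ lies in $P$, so it is automatically of $p$-power order; being a commutator in $G$, it satisfies $|[x,h]^{G}|\le m$. In other words, although we cannot feed the hypothesis an arbitrary power of a commutator, we can always feed it a commutator that happens to lie inside a $p$-group. This lets us run the usual BFC-type machinery (Neumann's theorem, the bound for the order of $\langle a^{G}\rangle'$ in terms of $|a^{G}|$, and the arguments of \cite{dieshu}) one prime at a time.

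First I would reduce to the case of soluble $G$. If $Q\le G$ is perfect, then by Ore's theorem every element of $Q$ is a commutator of $Q$, hence of $G$, so every element of $Q$ of prime power order has $G$-class of size at most $m$; since the same is then true of the $Q$-classes, Proposition \ref{easy} applied to $Q$ shows $|Q|$ is $m$-bounded. Applied to the components of $G$, together with the observation that a faithful action of $G$ on many isomorphic components would itself produce prime-power-order commutators with arbitrarily large classes, this forces the layer $E(G)$ and the permutation action of $G$ on it to be $m$-bounded. A short, if slightly delicate, argument — one must track $G'\cap C_{G}(E(G))$ rather than $C_{G}(E(G))$ itself, since a normal subgroup of bounded index need not control the second derived subgroup — then shows that the contribution of the semisimple part to $G''$ is $m$-bounded and reduces the theorem to the case in which $G$ is soluble and $C_{G}(F(G))\le F(G)$.

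Now I would induct on the Fitting height of $G$. When $G$ is nilpotent the claim is essentially Theorem \ref{known}(1) applied to each Sylow subgroup $P$: each $P$ is a direct factor of $G$, its commutators coincide with the prime-power-order commutators of $G$ supported there, so $|P''|$ is bounded by a function of $m$ alone; since this bound is independent of $p$, one has $P''=1$ as soon as $p$ exceeds it, so only $m$-boundedly many Sylow subgroups contribute and $|G''|$ is $m$-bounded. For the inductive step, write $F=F(G)=\prod_{p}O_{p}(G)$. Applying the local observation to each $P=O_{p}(G)$, every commutator $[x,g]$ with $x\in P$, $g\in G$ has $G$-class of size $\le m$; feeding these commutators — a normal subset of the normal $p$-subgroup $[P,G]$ — into the machinery of \cite{dieshu} should yield an $m$-bounded, $p$-independent bound for the part of $G''$ lying in $O_{p}(G)$. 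As in the nilpotent case, $p$-independence of the bound makes $G''\cap O_{p}(G)$ trivial for all large $p$, so $G''\cap F$ is $m$-bounded. It then remains to push this up: one applies the inductive hypothesis to $G/F$ and uses that the image of $G''$ in $G/F$ is $G''/(G''\cap F)$ and sits inside $(G/F)''$, so that $|G''|$ is $m$-bounded.

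The main obstacle is the one already flagged: the $p$-part, or any proper power, of a commutator need not be a commutator, so the hypothesis cannot be applied to it. Every place where the natural move would be to split a commutator into its prime-power components has to be rerouted — either through the local observation (commutators that genuinely lie inside a $p$-group) or, in simple sections, through Ore's theorem. The same difficulty reappears in the inductive step, since a commutator of prime power order in $G/F(G)$ need not lift to one of prime power order in $G$; showing that $G/F(G)$ nonetheless satisfies the hypothesis with an $m$-bounded parameter, or otherwise absorbing this lifting defect into $F(G)$ and the already-bounded $G''\cap F(G)$, is where I expect the bulk of the technical work to lie.
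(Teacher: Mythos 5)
Your proposal correctly identifies the central difficulty --- the hypothesis does not obviously pass to quotients such as $G/F(G)$ --- but it leaves that difficulty unresolved, and this is precisely where the substance of the proof lies. The paper's solution is to replace the set of all prime-power-order commutators by the subset $X(G)$ of commutators $[x,y]$ for which $x$ and $[x,y]$ lie in a common \emph{Sylow} subgroup. This set has two properties your ``local observation'' lacks: it is stable under passing to quotients (the image of $X(G)$ in $G/N$ is exactly $X(G/N)$, proved by conjugating a Sylow subgroup of $PN$ back into $P$ by an element of $N$), and by the focal subgroup theorem it generates all of $G'$. Your version, with $x$ ranging over a \emph{normal} $p$-subgroup, only produces commutators generating $[F(G),G]$, which can be far smaller than $G'$; in particular the claim that these control ``the part of $G''$ lying in $O_p(G)$'' does not follow from the machinery of \cite{dieshu}, which would at best bound $[O_p(G),G]'$. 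Separately, your induction on Fitting height is structurally unsafe: even if each step transferred the hypothesis at the cost of replacing $m$ by some $f(m)$, the Fitting height of a soluble group satisfying the hypothesis is not a priori $m$-bounded, so the compounded constant need not be $m$-bounded. The paper avoids any such induction in the soluble case: after discarding the primes $p>m$ (whose Sylow subgroups centralize $G'$), it takes a system normalizer $T$ of a Sylow basis $P_1,\dots,P_k$, writes any $x$ as $x_1\cdots x_k$ with $x_i\in P_i$, observes that $[x,a]$ with $a\in T$ is a product of at most $k<m$ elements of $X(G)$, hence $|[x,a]^G|\le m^m$, and then applies Theorem \ref{shumyresult} with $A=T$ together with $[G,T]=G'$ to bound $G''$ in a single step.

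Two further points. First, Ore's theorem concerns finite \emph{simple} groups, not arbitrary perfect groups, and the components of $G$ are quasisimple, where not every element is a commutator; so the reduction of the semisimple part via Proposition \ref{easy} as you state it does not quite go through. The paper instead bounds the semisimple layer elementarily: in each simple factor it produces (using Frobenius's normal $p$-complement theorem when the Sylow $2$-subgroup is abelian) a nontrivial element of $X(G)$ whose class in that factor has size at least $15$, so a product over $k$ factors yields an element of $X(G)$ with class size at least $15^k$, forcing $k\le\log_{15}m$. Second, the paper's global induction is on the index of the soluble radical $R(G)$ (which is $m$-bounded by the semisimple analysis), reducing to the case $G=G'$ with $G/R$ simple, where a single element $x_0\in X(G)$ with $[G,x_0]R=G$ forces $G=[G,x_0]$ to have $m$-bounded order by Lemma \ref{[G,x]}. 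Your proposal would need to import the quotient-stable generating set $X(G)$ and the system-normalizer argument to be completed.
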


In view of Theorem \ref{main} it is natural to raise the question whether a similar result holds for other commutator words. A theorem obtained in \cite{dms} says that if $w$ is a multilinear commutator word of weight $n$ and $G$ a group in which $|x^G|\leq m$ whenever $x$ is a $w$-value, then the commutator subgroup of the verbal subgroup $w(G)$ has finite $(m,n)$-bounded order. It would be interesting to see whether, given a finite group $G$, the hypothesis that $|x^G|\leq m$ for  any $w$-value $x$ of prime power order implies that the commutator subgroup of $w(G)$ has $(m,n)$-bounded order. At present we are unable to answer the question.

The notation used in this paper is mostly standard.
\section{Preliminaries}

We start this section with a proof of Proposition \ref{easy}. Throughout, we write $\pi(G)$ to denote the set of prime divisors of the order of a finite group $G$.

\begin{proof}[Proof of Propostition \ref{easy}] Recall that $G$ is a finite group in which $|x^G|\leq m$ whenever $x$ is an element of prime power order. We wish to show that $G'$ has $m$-bounded order. 

Let $p$ be a prime bigger than $m$ and assume that $p\in\pi(G)$. Let $P$ be a Sylow $p$-subgroup of $G$. Since $|x^G|\leq m$ whenever $x$ is an element of prime power order in $G$, it follows that $P$ commutes with any element of prime power order and so $P\leq Z(G)$.

Pick an arbitrary element $g\in G$ and suppose that the order of $g$ is $p_1^{\al_1}\dots p_s^{\al_s}$, where $p_i$ are distinct primes in $\pi(G)$. Write $g=g_1g_2\dots g_s$, where the element $g_i$ is of order $p_i^{\al_i}$. This can be done for example choosing $g_i$ as a generator of the Sylow $p_i$-subgroup of the cyclic group $\langle g\rangle$. In view of the above, $g_i\in Z(G)$ whenever $p_i> m$. It follows that there are less than $m$ non-central elements in the list $g_1,\dots,g_s$. Each of these non-central elements has centralizer of index at most $m$. Therefore $|g^G|\leq m^m$. This holds for an arbitrary element $g\in G$ so the claim follows from Neumann's theorem.
\end{proof}

We now provide an example of a finite group $G$ in which $|x^G|\le m$ whenever $x$ is a commutator of prime power order and the Fitting subgroup has arbitrarily large index.
\begin{example}\label{no2} {\rm Let $p_1,p_2,\dots,p_s$ be distinct odd primes, and for every $i$ let $D_i$ be the dihedral group of order $2p_i$. Let $G=\prod D_i$ be the direct product of the groups $D_i$. It is easy to see that $|x^G|\leq 2$ whenever $x$ is a commutator of prime power order. It is also clear that the Fitting subgroup of $G$ has index $2^s$. 
}
\end{example}

The above example shows that there is no obvious extension of Part 2 of Theorem \ref{known} for finite groups in which commutators of prime power order lie in conjugacy classes of size at most $m$.\\

In what follows we will require the concept of system normalizers. 
Let $G$ be a finite soluble group. Recall that $G$ has a Sylow system. This is a family of pairwise permutable Sylow $p_i$-subgroups
$P_i$ of $G$, exactly one for each prime in $\pi(G)$, and any two Sylow systems are conjugate. The system normalizer (also known as the basis normalizer) of such
a Sylow system in $G$ is $T=\cap_i N_G(P_i)$. The reader can consult \cite[pp. 235-240]{dh} or \cite[pp. 262-265]{robinson}
for general information on system normalizers. In particular, we will use the facts
that the system normalizers are conjugate, nilpotent, and $G = T\gamma_\infty(G)$ where
$\gamma_\infty(G)$ denotes the intersection of the lower central series of $G$. Moreover, the group $G$ is generated by its system normalizers. If $N$ is a normal subgroup of $G$, then the image of a system normalizer in the quotient $G/N$ is a system normalizer of $G/N$.

\begin{lemma}\label{[G,T]} Let $G$ be a finite soluble group and $T$ a system normalizer in $G$. Then $[G,T]=G'$.
\end{lemma}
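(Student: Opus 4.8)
The inclusion $[G,T]\le G'$ is immediate, so the task is to prove $G'\le[G,T]$. I would first reduce this to the assertion that $\gamma_\infty(G)\le[G,T]$. Put $R:=\gamma_\infty(G)$. Since $G=TR$, the image of $T$ in $G/R$ is all of $G/R$, and $G/R$ is nilpotent; hence $[G,T]R/R=[G/R,\,TR/R]=[G/R,\,G/R]=(G/R)'=G'R/R$, and as $R\le G'$ this gives $G'=[G,T]\,\gamma_\infty(G)$. Consequently it suffices to show $\gamma_\infty(G)\le[G,T]$, which I would prove by induction on $|G|$. If $\gamma_\infty(G)=1$ then $G$ is nilpotent and $T=G$, so there is nothing to prove; assume $R\neq1$.

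The case $R$ abelian is clean: using $[R,G]=R$ (as $R=\gamma_\infty(G)$) and $G=TR$, for $x\in R$, $t\in T$ and $s\in R$ one has $[x,ts]=[x,s]\,[x,t]^{s}=[x,t]$, because $[x,s]=1$ and $[x,t]\in R$ is centralized by $s$; hence $R=[R,G]=[R,T]\le[G,T]$. For the general case I would pick a minimal normal subgroup $N$ of $G$ with $N\le R$. Applying the induction hypothesis to $G/N$ (whose nilpotent residual is $R/N$ and whose system normalizer is $TN/N$) gives $R/N\le[G,T]N/N$, hence $R\le[G,T]N$, and by the modular law $R=(R\cap[G,T])\,N$. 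So it is enough to prove $N\le[G,T]$, for then $R=R\cap[G,T]\le[G,T]$.

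To prove $N\le[G,T]$ I would distinguish cases according to the action of $G$ and of $R$ on the elementary abelian, $G$-irreducible module $N$. If $R$ centralizes $N$ and $N$ is not central in $G$, then exactly as in the abelian case $[N,G]=[N,T]$, and since the module $N$ is nontrivial and irreducible $[N,G]=N$; hence $N=[N,T]\le[G,T]$. The remaining possibilities are that $N$ is central in $G$ — in which case the covering/avoidance behaviour of system normalizers (see \cite{dh}) shows that $T$ covers the central chief factor $N$, so $N\le T$ — or that $R$ does not centralize $N$, so $[N,R]=N$ and thus $N\le[R,R]$. In these two cases I would combine the covering property of $T$ with a Frattini-type argument inside $R$: passing to $G/\Phi(R)$ and using induction gives $R\le[G,T]\Phi(R)$, whence $R=(R\cap[G,T])\Phi(R)$, and the Frattini argument for the group $R$ forces $R=R\cap[G,T]\le[G,T]$ — this settles everything as long as $\Phi(R)\neq1$.

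The hard part, which I expect will need the most care, is the residual configuration in which $R=\gamma_\infty(G)$ is non-abelian with $\Phi(R)=1$ (this genuinely occurs, e.g.\ $R\cong A_4$ for $G=S_4$), because there the Frattini reduction is unavailable. In that situation I would argue directly, exploiting the identity $R=[R,T]\,R'$ (so that the normal closure of $[R,T]$ in $R$ is all of $R$) together with the precise way $T$ covers the central and avoids the eccentric chief factors lying inside $R$, in order to conclude that the chosen minimal normal subgroup $N$ is contained in $[G,T]$, which completes the induction.
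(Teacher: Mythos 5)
Your proposal does not close. You correctly identify the residual configuration ($R=\gamma_\infty(G)$ non\nobreakdash-abelian with $\Phi(R)=1$, which does occur, e.g.\ $G=S_4$, $R=A_4$) as the hard case, but for that case you offer only an intention (``I would argue directly, exploiting the identity $R=[R,T]R'$ \dots in order to conclude'') rather than an argument. The same issue affects the central subcase: from $N\le Z(G)$ you get $N\le T$ by the covering property, but $N\le T$ does not yield $N\le[G,T]$, and your fallback (the Frattini reduction) again requires $\Phi(R)\neq1$. So the induction is not completed, and this is a genuine gap, not a routine omission.

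The intended proof is far shorter and avoids the entire case analysis: pass to $\overline{G}=G/[G,T]$. The image $\overline{T}$ of $T$ is a system normalizer of $\overline{G}$ (images of system normalizers in quotients are system normalizers), and $[\overline{G},\overline{T}]=1$, so $\overline{T}\le Z(\overline{G})$; in particular $\overline{T}$ is normal, hence equal to all of its conjugates. Since a finite soluble group is generated by its system normalizers, which form a single conjugacy class, $\overline{G}=\overline{T}$ is abelian, i.e.\ $G'\le[G,T]$. Your opening reduction $G'=[G,T]\gamma_\infty(G)$ (via $G=T\gamma_\infty(G)$) is correct and is a reasonable first step, but the subsequent induction is exactly what the ``generated by its system normalizers'' fact lets you bypass; I would recommend replacing everything after that reduction with the quotient argument above.
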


\begin{proof} We just need to show that $G'\leq[G,T]$. We can pass to the quotient $G/[G,T]$ and assume that $[G,T]=1$. Then $T\leq Z(G)$. Since $G$ is generated by the conjugates of $T$, we deduce that $G=T$ is abelian. Hence, $G'\leq[G,T]$ and the lemma follows.
\end{proof} 

We will require the following theorem, which is a particular case of \cite[Theorem 1.2]{shumy}.

\begin{theorem} \label{shumyresult}
 Let $n$ be a positive integer and $G$ a group containing a subgroup $A$ such that $|[g,a]^G| \le n$ for all $g \in G$ and $a \in A$. Then the commutator subgroup of the subgroup $[G,A]$ has finite $n$-bounded order.
\end{theorem}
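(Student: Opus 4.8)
The plan is to reduce the statement to the classical theorem of B.\,H.~Neumann by isolating a normal subgroup on which \emph{every} element has boundedly many conjugates, and then applying Neumann's bound. Throughout write $H=[G,A]$; this is normal in $G$ because $\langle G,A\rangle=G$. The set $Y$ of all $G$-conjugates of the commutators $[g,a]$ ($g\in G$, $a\in A$) is a normal subset generating $H$, and by hypothesis every $y\in Y$ satisfies $[G:C_G(y)]=|y^G|\le n$.

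First I would reduce to the case where $G$ is finitely generated. Indeed $H'$ is the union of the derived subgroups $H_0'$ of the subgroups $H_0=[G_0,A_0]$, as $G_0$ ranges over the finitely generated subgroups of $G$ and $A_0=A\cap G_0$; each such $H_0$ satisfies the same hypothesis with the same $n$, and the collection $\{H_0'\}$ is directed by inclusion. Hence any bound for $|H_0'|$ depending only on $n$ --- crucially, \emph{not} on the number of generators of $G_0$ --- yields the same bound for $|H'|$. In the finitely generated case there are only finitely many subgroups of index at most $n$, so the centralizers $C_G(y)$ take only finitely many values; consequently $C_G(H)=\bigcap_{y\in Y}C_G(y)$ has finite index, $H/Z(H)$ is finite, and Schur's theorem already shows that $H'$ is finite. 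The entire difficulty is therefore to bound $|H'|$ by a function of $n$ \emph{alone}.

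The engine I would use is the relative form of Neumann's theorem: if $K$ is normal in a group and $|x^G|\le c$ for every $x\in K$, then $|K'|$ is $c$-bounded. The obstacle is that this cannot be applied to $K=H$ directly, since products of commutators need not lie in small classes; the hypothesis controls the conjugacy classes of the \emph{generators} $[g,a]$ but not of arbitrary elements of $H$, and one can build nilpotency-class-two examples showing that ``generated by a normal set of $n$-bounded classes'' alone does not bound the derived subgroup. What saves the situation is the full strength of the hypothesis: $|[g,a]^G|\le n$ holds for \emph{all} $g\in G$, which for each fixed $a$ constrains the map $g\mapsto[g,a]$ and hence the action of $a$ on $H$.

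Accordingly, the heart of the argument --- and the step I expect to be the main obstacle --- is to manufacture a normal subgroup $K\le H$ of $n$-bounded index in $H$ on which every element has an $n$-bounded number of $G$-conjugates. For each $a\in A$ one would analyse $[H,a]$ and $C_H(a)$, using that all values $[g,a]$ lie in classes of size $\le n$ to show that, modulo a piece of bounded order, $a$ induces on $H$ an automorphism of bounded ``breadth''; intersecting the resulting bounded-index subgroups over a suitable finite set of $a$'s (available after the finite-generation reduction) produces $K$. A Wiegold-type collecting and counting argument, together with Neumann's lemma on coverings by cosets, is what keeps both the index $[H:K]$ and the class bound on $K$ independent of the number of generators. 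Once $K$ is in hand, the relative Neumann theorem bounds $|K'|$, and a final bounded-index, bounded-order bookkeeping --- controlling $H'$ in terms of $K'$, the individual normal closures $\langle y^G\rangle'$ (each of $n$-bounded order by Schur applied inside $\langle y^G\rangle$, which is generated by the $\le n$ conjugates in $y^G$), and $[H:K]$ --- yields the desired $n$-bounded bound for $|H'|$. I note that the special case $A=G$ recovers the theorem of Dierings and Shumyatsky that $G''$ is $m$-bounded, so the argument can be read as a relativisation of their proof to the subgroup $A$.
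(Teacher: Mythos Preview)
The paper does not prove this theorem: it is quoted as a particular case of \cite[Theorem~1.2]{shumy} and then used as a black box in the proof of Lemma~\ref{soluble}. There is therefore no argument in the present paper against which to compare your proposal.

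Judged on its own, your proposal is a strategy rather than a proof, and the decisive step is absent. You correctly identify the obstacle --- the hypothesis bounds the classes of the generators $[g,a]$ but not of general elements of $H=[G,A]$, so the relative Neumann theorem cannot be applied to $H$ directly --- and you propose to cure it by producing a normal $K\le H$ of $n$-bounded index on which every element has $n$-bounded $G$-class. But you do not construct $K$: you only announce that ``one would analyse $[H,a]$ and $C_H(a)$'' and that an unspecified ``Wiegold-type collecting and counting argument'' will keep all bounds independent of the number of generators. That is exactly the content of the theorem, and no mechanism is offered for why such a $K$ should exist with the required uniformity. Your closing remark that the case $A=G$ ``recovers'' Dierings--Shumyatsky and that your argument is a relativisation of theirs is also misleading: their proof does \emph{not} proceed by locating such a $K$, but rather through the fact that each $[G,x]$ has $m$-bounded order (the analogue of Lemma~\ref{[G,x]} here) followed by passage to suitable quotients. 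Your reduction to finitely generated $G$ and the Schur step giving mere finiteness of $H'$ are routine; the uniform bound is the whole point, and that is precisely where your outline dissolves into assertion.
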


Suppose that $G$ is a group generated by a symmetrical set $X$ (here symmetrical means that $X=X^{-1}$). For any $g\in G$ we write $l_X(g)$ to denote the minimal number $l$ such that $g$ is a product of $l$ elements from $X$.

The next lemma is taken from \cite{dieshu}.

\begin{lemma}\label{lemm-1}
Let $H$ be a group generated by a symmetrical set $X$ and let $K$ be a subgroup of finite index $m$ in $H$. Then each coset $Kb$ contains an element $g$ such that $l_X(g)\leq m-1$.
 \end{lemma}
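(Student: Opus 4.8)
The plan is to perform a breadth-first search on the coset space $H/K$, or, equivalently, to track how fast the union of the cosets hit by short words in $X$ grows. For $i\geq 0$ put $X_i=\{g\in H:\ l_X(g)\leq i\}$, the set of elements that are products of at most $i$ elements of $X$, so that $X_0=\{1\}$ and $X_{i+1}=X_i\cup X_iX$. I will examine the nested families of right cosets $KX_0\subseteq KX_1\subseteq\cdots$ and show that the number of cosets they contain strictly increases until it reaches $m$. Since $KX_0=K$ already accounts for one coset, this forces $KX_{m-1}=H$, which is exactly the assertion of the lemma: any $b\in H$ then lies in $KX_{m-1}$, so $Kb=Kg$ for some $g$ with $l_X(g)\leq m-1$, and this $g$ belongs to the coset $Kb$.

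The key steps, in order, are as follows. First, record the elementary facts $X=X^{-1}$, $\langle X\rangle=H$, and $X_{i+1}=X_i\cup X_iX$, whence $KX_{i+1}=KX_i\cup (KX_i)X$. Second, establish the dichotomy: for each $i$, either $KX_{i+1}$ contains strictly more cosets than $KX_i$, or $KX_i=H$. Indeed, if $KX_{i+1}=KX_i$, then the set $S$ of right cosets making up $KX_i$ satisfies $S\cdot x\subseteq S$ for every $x\in X$; because $X$ is symmetrical, $S$ is then invariant under the right action of each $x^{-1}$ as well, hence under the group $\langle X\rangle=H$. But $H$ permutes the $m$ cosets transitively, and $S$ is nonempty (it contains $K$), so $S$ is all of $H/K$, i.e. $KX_i=H$. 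Third, combine these: $|KX_0/K|=1$, and as long as not all $m$ cosets have been reached each further step adds at least one, so $|KX_i/K|\geq\min(i+1,m)$; in particular $KX_{m-1}=H$. Finally, conclude as indicated above.

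The only point that goes beyond bookkeeping is the second step — the observation that a union of cosets closed under right multiplication by $X$ must be the whole group. This is where both hypotheses enter: that $X$ generates $H$ and that $X$ is symmetrical. It is the algebraic counterpart of the graph-theoretic fact that a breadth-first search from a vertex in a connected graph on $m$ vertices reaches every vertex within $m-1$ steps, the Schreier coset graph of $K$ in $H$ (with edge set indexed by $X$) being connected precisely because $X$ generates $H$. Everything else in the argument is routine.
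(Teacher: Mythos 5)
Your argument is correct, and it is essentially the standard one behind this lemma (the paper itself gives no proof here, citing Dierings--Shumyatsky, whose proof is the same coset/breadth-first-search counting): the sets $KX_i$ strictly grow in number of cosets until they exhaust $H$, which must happen by step $m-1$. No gaps; the key closure-under-$X$ step and the transitivity of the right action are both handled properly.
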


The following lemma is similar to Lemma 2.3 of \cite{dieshu}. For the reader's convenience we provide a proof.
\begin{lemma} \label{[G,x]} Let $m$ be a positive integer, and let $X$ be the set of all elements $g$ of a group $G$ such that $|g^G|\le m$. Assume that $G$ is generated by $X$ and let $x\in X$. Then $[G,x]$ has $m$-bounded order. 
\end{lemma}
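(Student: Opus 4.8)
The plan is to pass to a boundedly generated subgroup and then invoke Schur's theorem. Write $C=C_G(x)$; since $x\in X$ the index $[G:C]=|x^G|$ is at most $m$. The first, and crucial, observation is that the element $[g,x]=g^{-1}x^{-1}gx$ depends only on the coset $Cg$: for $c\in C$ one has $[cg,x]=g^{-1}(c^{-1}x^{-1}c)gx=g^{-1}x^{-1}gx=[g,x]$. Hence the set $\{[g,x]:g\in G\}$ has at most $[G:C]\le m$ elements, and since $[G,x]$ is generated by this set we get $[G,x]=\langle [g_1,x],\dots,[g_t,x]\rangle$ for any transversal $g_1,\dots,g_t$ (with $t=[G:C]\le m$) of $C$ in $G$.

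Next I would apply Lemma~\ref{lemm-1}. The set $X$ is symmetrical, because $|(g^{-1})^G|=|g^G|$, and by hypothesis it generates $G$; since $C$ has index $t\le m$ in $G$, we may choose the transversal so that $l_X(g_i)\le t-1\le m-1$ for every $i$. Writing $g_i=y_{i1}\cdots y_{ir_i}$ with $r_i\le m-1$ and each $y_{ij}\in X$, put $H=\langle\, x,\ y_{ij}: 1\le i\le t,\ 1\le j\le r_i \,\rangle$. Then $H$ is generated by at most $m^2-m+1$ elements, and each of these generators has conjugacy class in $H$ of size at most $m$, since its $H$-class is contained in its $G$-class. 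As $g_i\in H$ for all $i$, we obtain $[g_i,x]\in[H,x]$, whence $[G,x]=\langle[g_i,x]:i\rangle\le[H,x]\le H'$; the reverse inclusion $[H,x]\le[G,x]$ is trivial, so in fact $[G,x]=[H,x]\le H'$.

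Finally I would bound $|H'|$. The subgroup of $H$ centralizing all of the (at most $m^2-m+1$) chosen generators is contained in $Z(H)$ and, being an intersection of at most $m^2-m+1$ subgroups each of index at most $m$, has index at most $m^{m^2-m+1}$ in $H$. Thus $[H:Z(H)]$ is $m$-bounded; in particular every conjugacy class of $H$ has $m$-bounded size, so Neumann's theorem (together with the effective bounds of \cite{wie,gura}) shows that $|H'|$ is $m$-bounded, and therefore so is $|[G,x]|$. I do not expect a genuine obstacle here: the one point that has to be got right is the first step, namely that $[g,x]$ is constant on the cosets of $C_G(x)$ — it is precisely this that keeps the number of generators of $[G,x]$, and hence the rank of $H$, under control, so that $H$ is centre-by-finite with $m$-bounded central index; the rest is routine.
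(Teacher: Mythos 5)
Your argument is correct and follows essentially the same route as the paper: reduce $[G,x]$ to boundedly many generators via coset representatives of $C_G(x)$ of small $X$-length (Lemma \ref{lemm-1}), enclose everything in a subgroup $H$ with an $m$-bounded number of generators each of whose conjugacy classes has size at most $m$, deduce that $Z(H)$ has $m$-bounded index, and finish with Schur's theorem. The only (harmless) deviation is that you generate $H$ by the elements $x$ and $y_{ij}$ themselves rather than by all their $G$-conjugates as the paper does; since each $[g_i,x]$ already lies in $\langle x, y_{ij}\rangle$, the conjugates are indeed not needed.
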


\begin{proof}
 Since $C_G(x)$ has index at most $m$ in $G$, by Lemma \ref{lemm-1} we can choose elements $y_1, \dots , y_m $ such that $l_X(y_i) \le m-1$ and $[G, x]$ is generated by the commutators $[y_i, x]$. For each $i = 1, \dots, m$, write $y_i = y_{i1} \dots y_{i(m-1)}$ where $y_{ij} \in X$. We can write $[y_i,x]$ as a product of conjugates of $[y_{ij},x]$. Let $h_1,\dots,h_s$ be the conjugates in $G$ of elements from the set $K=\{ x, y_{ij}; 1 \le i \le m,\ 1\le j\le m-1\}$. Notice that the size of each conjugacy class containing an element of $K$ is less than or equal to $m$, and $K$ has at most $m(m-1)+1$ elements. Therefore $s$ is $m$-bounded. Let $H=\langle h_1,\dots,h_s\rangle $. It is clear that $[G,x]\le H'$, so it is sufficient to show that $H'$ has finite $m$-bounded order. Observe that $C_G(h_i)$ has index at most $m$ in $G$ for each $i=1,\dots,s$. It follows that the centre $Z(H)$ has index at most $m^s$ in $H$. Thus Schur's theorem \cite[10.1.4]{robinson} tells us that $H'$ has finite $m$-bounded order.
 \end{proof}

\section{Theorem \ref{main}}

A difficulty that one encounters while looking for an approach to a proof of Theorem \ref{main} is that the hypothesis may fail in a homomorphic image of $G$. More precisely, if $Y(G)$ denotes the set of commutators of prime power order of the finite group $G$ and if $\overline{G}$ is a homomorphic image of $G$, then in general $Y(\overline{G})$ is not necessarily the image of $Y(G)$.

To circumvent the obstacle we will work with the subset $X(G)$ of $Y(G)$ that consists of all commutators $[x,y]$ for which there exists a Sylow subgroup $P\leq G$  such that $x$ and $[x,y]$ both lie in  $P$ while $y$ can be any element of $G$. Thus, 
 $$X(G)=\{[x,y] \, |\text{ there is } P\in Syl(G) \text{ such that }  x,[x,y]\in P,  y\in G\}.$$

The main advantage of working with the set $X(G)$ rather than $Y(G)$ is that $X(G)$ survives taking quotients. More precisely, we have the following lemma.

\begin{lemma}
Let $G$ be a finite group and $N$ a normal subgroup of $G$. Set $\overline{G} = G/N$. Then $\overline{X(G)}=X(\overline{G})$. 
\end{lemma}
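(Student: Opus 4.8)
The plan is to prove the two inclusions $\overline{X(G)}\subseteq X(\overline G)$ and $X(\overline G)\subseteq\overline{X(G)}$ separately. The first is routine. If $[x,y]\in X(G)$ is witnessed by a Sylow $p$-subgroup $P$ of $G$ with $x,[x,y]\in P$, then, since the image of a Sylow $p$-subgroup under the projection $G\to\overline G$ is again a Sylow $p$-subgroup, the group $PN/N$ is a Sylow $p$-subgroup of $\overline G$; it contains both $\overline x$ and $\overline{[x,y]}=[\overline x,\overline y]$, and $\overline y\in\overline G$, so $\overline{[x,y]}\in X(\overline G)$.

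The content of the lemma is the reverse inclusion. The difficulty is that a naive preimage of the data witnessing membership in $X(\overline G)$ need not witness membership in $X(G)$: a preimage of the commutator $[\overline a,\overline b]$ lies in $PN$ but usually not in an honest Sylow subgroup $P$ of $G$, and conjugating it into $P$ inside $PN$ will in general change its image in $\overline G$, since $PN/N$ need not be abelian. So let $\overline z=[\overline a,\overline b]\in X(\overline G)$, with $\overline a,\overline z$ lying in a Sylow $p$-subgroup $\overline Q$ of $\overline G$ and $\overline b\in\overline G$. I would write $\overline Q=PN/N$ for a Sylow $p$-subgroup $P$ of $G$, put $H=PN$, and choose preimages $a\in P$ of $\overline a$ and $b\in G$ of $\overline b$; then $\overline z\in\overline Q$ forces $[a,b]\in H$, so $a^{b}=a[a,b]\in H$, and $a^{b}$ is a $p$-element of $G$ since $a$ is. The key step is to replace $b$ by a product $g=bh$ with $h\in H$ for which $a^{g}\in P$ while $\overline{a^{g}}=\overline{a^{b}}$ (the latter equality already forces $[\overline a,\overline g]=[\overline a,\overline b]$, whatever $\overline g$ happens to be). To find such an $h$ I would work inside the full preimage $C^{*}\le H$ of the centralizer $C_{H/N}(\overline{a^{b}})$; note $a^{b}\in C^{*}$ because $\overline{a^{b}}$ centralizes itself. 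One checks that $P\cap C^{*}$ is a Sylow $p$-subgroup of $C^{*}$: it surjects onto $C_{H/N}(\overline{a^{b}})$ with kernel $P\cap N$, which is a Sylow $p$-subgroup of $N$. Since $a^{b}$ is a $p$-element of $C^{*}$, Sylow's theorem gives $h\in C^{*}$ with $(a^{b})^{h}\in P\cap C^{*}\subseteq P$; and since $h\in C^{*}$, conjugation by $\overline h$ fixes $\overline{a^{b}}$, so $g=bh$ has the required properties. Finally $a$ and $a^{g}$ both lie in $P$, so $[a,g]=a^{-1}a^{g}\in P$ belongs to $X(G)$ (witnessed by $P$, with $g\in G$), while $\overline{[a,g]}=\overline a^{-1}\,\overline{a^{b}}=[\overline a,\overline b]=\overline z$; hence $\overline z\in\overline{X(G)}$.

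The step I expect to be the main obstacle is exactly this lifting: one cannot conjugate a preimage of $[\overline a,\overline b]$ into a Sylow subgroup of $G$ freely, because doing so inside $PN$ distorts its image in $\overline G$. The resolution is to carry out the Sylow conjugation inside the smaller group $C^{*}$, where the conjugating element is automatically trivial on $\overline{a^{b}}$ modulo $N$. Along the way I will use the standard facts that every Sylow $p$-subgroup of $\overline G$ is the image of some Sylow $p$-subgroup of $G$, and that $P\cap N$ is a Sylow $p$-subgroup of $N$.
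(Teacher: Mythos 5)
Your proof is correct and follows essentially the same strategy as the paper's: lift the data to the preimage $PN$ of the Sylow subgroup, land $a^{b}$ in $PN$, and then conjugate it into $P$ by an element whose effect is invisible modulo $N$. The only difference is tactical --- the paper notes directly that the two Sylow $p$-subgroups of $PN$ are conjugate by an element of $N$ (write the conjugator in $PN=NP$ as a product of an element of $N$ and an element of $P$, the latter stabilizing $P$), which makes your detour through the preimage $C^{*}$ of the centralizer of $\overline{a^{b}}$ unnecessary, though your version is equally valid.
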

\begin{proof} 
    Of course, $\overline{X(G)} \subseteq X(\overline{G})$. We need to establish the inverse containment $X(\overline{G}) \subseteq \overline{X(G)}$. 
    
Pick $[\overline{x},\overline{y}] \in X(\overline{G})$. There is a Sylow $p$-subgroup $\overline{P}$ of $\overline{G}$ such that $[\overline{x}, \overline{y}] \in \overline{P}$ and $\overline{x} \in \overline{P}$. The full preimage of $\overline{P}$ is $PN$ for some Sylow $p$-subgroup $P$ of $G$. Choose $x$ and $y$ such that $\overline{x}=xN$ and $\overline{y}=yN$. Both elements $x$ and $x^y$ lie in $PN$.
    
Write $x = x'n$, where $x'\in P$ and $n\in N$.  Since $x^y\in PN$, we also have $x'^y\in PN$. Thus, there is a Sylow $p$-subgroup $P_0$ in $PN$ such that $x'^y\in P_0$. Since $P_0$ is conjugate to $P$ in $PN$, there is an element $a\in N$ such that $x'^{ya}\in P$. 

So we have 
    $$[\overline{x}, \overline{y}] = [x',y]N = [x', ya]N.$$
Since $[x', ya] \in P$ and $x' \in P$, it follows that $[x', ya]\in X(G)$ and therefore $[x,y]\in X(G)N$. This completes the proof.
\end{proof} 

Let $G$ be a finite group and $P$ a Sylow $p$-subgroup of $G$. An immediate corollary of the focal subgroup theorem \cite[Theorem 7.3.4]{go}  is that  $P\cap G'$ is generated by $P\cap X(G)$. Consequently, $G'$ is generated by $X(G)$.

\begin{lemma} \label{propriedC1} Let $G$ be a finite group such that $|x^G|\leq m$ whenever $x\in X(G)$. Let $P$ be a Sylow $p$-subgroup of $G$ for some prime $p>m$. Then $[G',P]=1$.
\end{lemma}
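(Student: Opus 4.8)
The plan is to exploit the remark made just above the statement, that $G'$ is generated by $X(G)$: with that in hand, it suffices to prove that $P$ centralizes every single element of $X(G)$, for then $P$ lies in $\bigcap_{w\in X(G)}C_G(w)=C_G(\langle X(G)\rangle)=C_G(G')$, which is exactly $[G',P]=1$.

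So I would fix an arbitrary $w\in X(G)$ and argue that $P\le C_G(w)$. The hypothesis gives $[G:C_G(w)]=|w^G|\le m<p$, so the index of $C_G(w)$ in $G$ is prime to $p$. Since $C_P(w)=P\cap C_G(w)$, the standard embedding of $P/(P\cap C_G(w))$ into $G/C_G(w)$ yields $[P:C_P(w)]\le[G:C_G(w)]<p$; but $[P:C_P(w)]$ is a power of $p$, and the only power of $p$ strictly smaller than $p$ is $1$. Hence $C_P(w)=P$, i.e.\ $P\le C_G(w)$, as wanted.

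Since $w$ was arbitrary, $P$ centralizes $X(G)$ and therefore its generated subgroup $G'$, which completes the proof. There is essentially no serious obstacle here: the content is entirely in the two facts already available, namely that the focal subgroup theorem forces $G'=\langle X(G)\rangle$, and that the bound $|w^G|\le m<p$ is a coprimality statement pushing a full Sylow $p$-subgroup into $C_G(w)$. The one point to be slightly careful about is to use the generation statement $G'=\langle X(G)\rangle$ rather than, say, only the information about $p$-elements of $X(G)$: a priori different elements of $X(G)$ could force different Sylow $p$-subgroups into their centralizers, but the index argument above sidesteps this by working with the fixed $P$ throughout.
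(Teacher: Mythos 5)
Your proof is correct and follows essentially the same route as the paper: both reduce to showing $P\le C_G(x)$ for each $x\in X(G)$ and then invoke the focal-subgroup consequence that $G'=\langle X(G)\rangle$. The only cosmetic difference is how the containment $P\le C_G(x)$ is obtained — you use the index inequality $[P:P\cap C_G(x)]\le[G:C_G(x)]\le m<p$, while the paper passes to the normal core of $C_G(x)$, whose index divides $m!$ and is therefore prime to $p$; both are valid one-line coprimality arguments.
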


\begin{proof}
 For $x\in X(G)$ let $N_x$ be the normal core of $C_G(x)$, i.e., $N_x = \underset{g \in G}{\bigcap} C_G(x)^g $. As $[G:C_G(x)]\le m$, the index $[G:N_x]$ divides $m!$. 

Taking into account that $p>m$ we conclude that $P\leq N_x$. Since this happens for all $x \in X(G)$ and since $G'$ is generated by $X(G)$, it follows that $[G',P]=1$.
\end{proof}

In the next lemma we will use the well-known fact that if $A$ is a group of coprime automorphisms of a finite group $H$ such that $[H,A,A]=1$, then $A=1$ (see for example \cite[Theorem 5.3.6]{go}).
\begin{lemma} \label{soluble}
    Let $G$ be a finite soluble group such that $|x^G|\leq m$ whenever $x\in X(G)$. Then $G''$ has $m$-bounded order.
\end{lemma}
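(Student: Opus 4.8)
The plan is to induct on the derived length of $G$ (or rather to reduce to a situation with small derived length via the system normalizer machinery) and to combine the two tools we have assembled: Lemma \ref{[G,T]}, which says $[G,T]=G'$ for a system normalizer $T$, and Theorem \ref{shumyresult}, which turns a hypothesis of the form ``$|[g,a]^G|\le n$ for all $g\in G$, $a\in A$'' into an $n$-bounded order for $[G,A]'$. First I would like to pass to the quotient by $F=O_{\pi}(Z(G))$-type central factors coming from large primes: by Lemma \ref{propriedC1}, for every prime $p>m$ the Sylow $p$-subgroup $P$ satisfies $[G',P]=1$, and by Proposition \ref{easy}-style reasoning (centralising all of $X(G)$, hence all of $G'$) one shows that the contribution of primes exceeding $m$ to $G''$ is trivial; so $\pi(G'')$ consists only of primes at most $m$, and in particular $\pi(G')$ can be assumed bounded after factoring out a central piece. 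The real work is to bound $|G''|$, and for that I would study the action of a system normalizer $T$ on $G'$.

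The key step is the following. Let $T$ be a system normalizer of $G$. By Lemma \ref{[G,T]}, $G'=[G,T]$. I claim that for every $t\in T$ and every $g\in G$, the commutator $[g,t]$ — when restricted appropriately — lands in $X(G)$ up to a controlled adjustment, so that $|[g,t]^G|$ is $m$-bounded. The point is that $T$ is nilpotent, so $t$ is a product of its prime-power components $t=t_{p_1}\cdots t_{p_k}$ with $t_{p_i}\in T$ of $p_i$-power order; and $T$ normalizes each member $P_i$ of the Sylow system, so $[P_i,t_{p_i}]\le P_i$ and in fact $[x,t_{p_i}]\in P_i$ for $x\in P_i$. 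Using that $G'$ is generated by $P_i\cap X(G)$ over all $i$ (focal subgroup theorem, as noted before Lemma \ref{propriedC1}), one sees that the generators of $G'=[G,T]$ obtained as commutators $[x,t_{p_i}]$ with $x$ ranging over $P_i$ are precisely elements of $X(G)$: indeed $x\in P_i$ and $[x,t_{p_i}]\in P_i$, which is a Sylow subgroup, so the pair $(x,t_{p_i})$ witnesses membership in $X(G)$. Hence each such commutator has conjugacy class of size at most $m$. Feeding $A=\langle t_{p_i}\rangle$-type subgroups, or more cleanly $A=T_{p_i}$ (the Sylow $p_i$-subgroup of $T$), into Theorem \ref{shumyresult} bounds $[G,T_{p_i}]'$; since $G'=[G,T]=\prod_i[G,T_{p_i}]$ (as $T$ is nilpotent and the factors normalise each other), and since $\pi(T)$ equals $\pi(G)$ but only boundedly many primes survive in $G'$ modulo centre, one assembles an $m$-bound on $G''$ from boundedly many $m$-bounded factors.

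The main obstacle I anticipate is the bookkeeping at the bottom: after reducing, the number of relevant primes (those $\le m$) is $m$-bounded, but a priori $[G,T]$ is a product of the $[G,T_{p}]$ over \emph{all} $p\in\pi(G)$, and for primes $p>m$ the factor $[G,T_p]$ need not be trivial even though $[G',T_p]$-type relations are — so I would need to argue carefully that $[G,T_p]\le Z(G')$-ish, equivalently that $[[G,T_p],[G,T_q]]$ and $[[G,T_p],G']$ are trivial for $p>m$, so that these large-prime factors contribute nothing to $G''$. This should follow from Lemma \ref{propriedC1} together with coprime action (the cited fact that $[H,A,A]=1$ forces $A=1$ for coprime $A$): if $P$ is a large Sylow subgroup then $[G',P]=1$, and $T_p\le P$ up to conjugacy, so $[G',T_p]=1$ and the factor $[G,T_p]$ centralises $G'$, hence contributes trivially to $G''$. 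The second, milder, obstacle is ensuring that Theorem \ref{shumyresult} is applied with a subgroup $A$ for which the hypothesis ``$|[g,a]^G|\le n$ for all $g\in G$'' genuinely holds — this needs the observation that commutators $[g,t_p]$ with $g\notin P$ still give elements of $X(G)$ after the usual Sylow-conjugation trick from the proof of the quotient lemma, which is exactly the kind of adjustment that lemma was designed to supply.
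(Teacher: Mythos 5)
Your overall strategy coincides with the paper's: reduce to the case where every prime divisor of $|G|$ is at most $m$ (so that the number $k$ of factors in a Sylow basis is less than $m$), then combine Lemma \ref{[G,T]} with Theorem \ref{shumyresult} applied to a system normalizer $T$. Your reduction via Lemma \ref{propriedC1} is sound: the Hall subgroup for the primes exceeding $m$ centralizes $G'$, so the corresponding part of $G'$ is central in $G'$ and contributes nothing to $G''$, and after factoring it out one may assume $\pi(G)$ consists of primes at most $m$. This also disposes of your worry about the factors $[G,T_p]$ for $p>m$.

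The genuine gap is in how you verify the hypothesis of Theorem \ref{shumyresult}. That theorem requires $|[g,a]^G|\le n$ for \emph{every} $g\in G$ and $a\in A$, and your proposed patch for arbitrary $g$ --- that $[g,t_p]$ ``still gives an element of $X(G)$ after the usual Sylow-conjugation trick'' --- does not work: for $g$ outside the relevant Sylow subgroup, $[g,t_p]=(t_p^{-1})^{g}\,t_p$ is a product of two $p$-elements lying in different Sylow $p$-subgroups and in general does not have prime power order at all, so no conjugation can place it in $X(G)$. The correct step is not to show $[g,a]\in X(G)$ but to write $g=x_1\cdots x_k$ with $x_i\in P_i$ along the Sylow basis and expand $[x_1\cdots x_k,a]$ as a product of $k$ conjugates of the commutators $[x_i,a]$; since $a\in T$ normalizes each $P_i$, each $[x_i,a]$ lies in $P_i$ together with $x_i$ and hence belongs to $X(G)$, so $|[g,a]^G|\le m^k$ with $k<m$. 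With this bound you should invoke Theorem \ref{shumyresult} once, with $A=T$ and $n=m^k$, obtaining that $[G,T]'=G''$ has $m$-bounded order by Lemma \ref{[G,T]}. Your alternative assembly from the primary components $A=T_{p_i}$ has a second defect: bounding each $[G,T_{p_i}]'$ does not control the mutual commutators $\bigl[[G,T_{p_i}],[G,T_{p_j}]\bigr]$, which also enter the derived subgroup of $\prod_i[G,T_{p_i}]=G'$; taking $A=T$ whole avoids this entirely.
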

\begin{proof}
Let $\pi_1 = \{p_1, \dots , p_k \} $ the set of prime divisors of $|G|$ which are less than or equal to $m$ and $\pi_2 =\pi(G)\setminus\pi_1$. Let $H_1$ be a $\pi_1$-Hall subgroup and $H_2$ a $\pi_2$-Hall subgroup of $G$. 

By Lemma \ref{propriedC1}, $H_2$ centralizes $G'$. Therefore $G'=K_1\times K_2$, where $K_1=G'\cap H_1$ and $K_2=G'\cap H_2$. Obviously $G$$''$ has trivial intersection with $K_2$ so we pass to the quotient $G/K_2$ and simply assume that $G'\leq H_1$. In particular $H_1$ is normal and $[H_1,H_2,H_2]=1$. It follows that $G=H_1\times H_2$. Therefore $G''={H_1}''$ and so without loss of generality we can assume that $G=H_1$.

Let $\{ P_1, \dots , P_k \}$ be a Sylow basis of $G$ and $T = \bigcap_{i=1}^k N_G(P_i)$ the system normalizer. Since $G = P_1P_2 \dots P_k$, any element $x\in G$ can be written as a product $x = x_1x_2 \dots x_k$, where $x_i\in P_i$. For $a\in G$ write
$$ [x, a] = [x_1x_2 \dots x_k, a] = [x_1, a]^{x_2 \dots x_k}[x_2, a]^{x_3 \dots x_k} \dots [x_k, a].$$

If $a\in T$, we have $[x_i, a]\in P_i$ and therefore $[x,a]$ is a product of at most $k$ elements from $X(G)$. So the index 
$[G: C_G([x,a])]$ is at most $m^k$. Obviously $k$ here is less than $m$. Thus, for any $x\in G$, $a\in T$ we have $|[x,a]^G|\leq m^k$, which is $m$-bounded. The combination of Lemma \ref{[G,T]} and Theorem \ref{shumyresult} now implies that $[G, T]' = G''$ has $m$-bounded order. 
\end{proof}

We say that a group is semisimple if it is a direct product of non-abelian simple groups. 

\begin{lemma} \label{semisimple} Let $G$ be a finite semisimple group such that $|x^G|\leq m$ whenever $x\in X(G)$. Then $G$ has $m$-bounded order.
\end{lemma}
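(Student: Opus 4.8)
The plan is to show that $G$ itself has $m$-bounded order by controlling both the number of simple factors and the order of each factor. Write $G = S_1 \times \dots \times S_r$ with each $S_i$ a non-abelian simple group. The first observation is that for a non-abelian simple group $S$, every element of $S$ lies in $S' = S$, and in fact $S$ is generated by elements of $X(S)$ by the focal subgroup remark preceding Lemma \ref{propriedC1}; more to the point, since $S$ is perfect, a Sylow $p$-subgroup $P$ of $S$ satisfies $P = P \cap S' = \langle P \cap X(S)\rangle$, so $X(S)$ is nonempty and in particular $S$ contains a nontrivial element of $X(G)$ (taking $x \in P \cap X(S) \le P$, viewed as a Sylow subgroup of $G$ sitting inside $S_i$). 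For such an element $x \in X(G)$ lying in a single factor $S_i$, the centralizer $C_G(x)$ contains all factors $S_j$ with $j \ne i$, so $|x^G| \le m$ forces $|x^{S_i}| \le m$; since $S_i$ is non-abelian simple and $x \ne 1$, the class $x^{S_i}$ generates $S_i$, and a non-abelian simple group with a nontrivial conjugacy class of size at most $m$ has $m$-bounded order (there are only finitely many simple groups of order below any bound, and the minimal class size of a nontrivial element grows with $|S|$). Hence each $S_i$ has $m$-bounded order.

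Next I would bound $r$. The key point is that a nontrivial element $x$ lying in a single factor $S_i$ has $|x^G| = |x^{S_i}|$, which is \emph{independent of $r$} and at least $2$; the hypothesis gives no immediate contradiction from a single factor. Instead, consider an element $x = x_1 \cdots x_t \in X(G)$ with the $x_i$ nontrivial, spread across $t$ distinct factors; then $|x^G| = \prod |x_i^{S_{j_i}}| \ge 2^t$, so $t \le \log_2 m$. To turn this into a bound on $r$, I would use that $X(G)$ survives quotients (the lemma preceding the focal-subgroup remark) together with the structure of $X$ in a product: if $P$ is a Sylow $p$-subgroup of $G$, then $P = P_1 \times \dots \times P_r$ with $P_i$ a Sylow $p$-subgroup of $S_i$ whenever $p \mid |S_i|$, and an element of $P \cap X(G)$ of the form $[x,y]$ with $x \in P$ can be taken with $y$ also decomposable coordinatewise, so $P \cap X(G)$ contains elements supported on as many factors as we wish (for a fixed prime $p$ dividing all the $|S_i|$ — and by the classification every non-abelian simple group is divisible by $2$, so we may take $p = 2$). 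Concretely, pick in each $S_i$ a nontrivial element $x_i = [a_i, b_i] \in X(S_i) \cap P_i$ with $P_i$ a Sylow $2$-subgroup of $S_i$; then $x = (x_1, \dots, x_r) = [(a_1,\dots,a_r),(b_1,\dots,b_r)]$ lies in $P_1 \times \dots \times P_r \in Syl_2(G)$ and in $X(G)$, and $|x^G| = \prod_{i=1}^r |x_i^{S_i}| \ge 2^r$. Therefore $2^r \le m$, so $r$ is $m$-bounded, and combined with the previous paragraph $|G| = \prod |S_i|$ is $m$-bounded.

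The main obstacle I anticipate is the very last construction: producing a single element of $X(G)$ that is simultaneously nontrivial in \emph{every} factor. Doing this requires (i) choosing in each $S_i$ a nontrivial commutator that lies in a Sylow $2$-subgroup — which is where the fact $P \cap G' = \langle P \cap X(G)\rangle$ for $G = S_i$, together with $S_i$ perfect and $P$ nontrivial, is needed to guarantee $P \cap X(S_i) \ne \{1\}$ — and (ii) checking that the Sylow $2$-subgroups can be chosen compatibly across the product, which is automatic since a Sylow $2$-subgroup of a direct product is the direct product of Sylow $2$-subgroups of the factors, and similarly $[\,\cdot\,,\,\cdot\,]$ acts coordinatewise. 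Once these bookkeeping points are in place the inequality $2^r \le m$ is immediate. An alternative, perhaps cleaner, route to bounding $r$ is to invoke Proposition \ref{easy} or Lemma \ref{[G,x]}: since $G = G'$ is generated by $X(G)$ and every element of $X(G)$ has class size at most $m$, Lemma \ref{[G,x]} shows $[G,x]$ has $m$-bounded order for each $x \in X(G)$; picking $x$ nontrivial in one factor $S_i$ gives $[G,x] \ge [S_i, x] = S_i$, so each $S_i$ has $m$-bounded order, and then a separate counting argument (or a direct appeal to the product construction above) pins down $r$.
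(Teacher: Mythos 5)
Your proof is correct and follows essentially the same strategy as the paper: each simple factor is bounded because it contains a nontrivial element of $X(G)$ whose centralizer has index at most $m$ (though your parenthetical justification about minimal class sizes growing with $|S|$ should simply be replaced by the observation that $S_i$ then embeds into $\mathrm{Sym}(m)$, giving $|S_i|\le m!$), and the number of factors is bounded by assembling a single element of $X(G)$ that is nontrivial in every factor inside a Sylow $2$-subgroup of the product, using Feit--Thompson for even order. The only differences are cosmetic: the paper constructs the per-factor commutators explicitly (treating abelian and non-abelian Sylow $2$-subgroups separately, invoking Frobenius' theorem in the abelian case) rather than quoting the focal subgroup theorem, and it sharpens the per-factor class-size lower bound from $2$ to $15$ via the fact that centralizer indices in nonabelian simple groups are never prime powers, which only changes the base of the logarithm in the bound on the number of factors.
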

\begin{proof} Write $G=S_1\times\dots\times S_k$, where $S_i$ are the simple factors. Each of these factors contain a subgroup of index at most $m$ whence, because of the simplicity, it follows  that $|S_i|\leq m!$. Therefore we only need to show that $k$ is $m$-bounded.

Recall that simple groups have even order \cite{ft}. For every $i=1,\dots,k$ choose a Sylow 2-subgroup $P_i$ of $S_i$ and elements $x_i\in P_i$ and $y_i\in S_i$ such that $1\neq[x_i,y_i]\in Z(P_i)$. Indeed, if the subgroup $P_i$ is non-abelian, both elements $x_i$ and $y_i$ can be chosen inside $P_i$. If $P_i$ is abelian, then the existence of $x_i$ and $y_i$ is immediate from the theorem of Frobenius \cite[Theorem 7.4.5]{go}. For every $i=1,\dots,k$ set $g_i=[x_i,y_i]$. Since the index of the centralizer of an element in a simple group is not a prime power \cite[Lemma 4.3.2]{go}, it follows that $|g_i^{S_i}|\geq15$. Let $g=\prod_ig_i$. Observe that $g=[\prod x_i,\prod y_i]\in\prod P_i$. We deduce that $g\in X(G)$ and $|g^{G}|\geq15^k$. Hence, $15^k\leq m$ and so $k\leq\log_{15}(m)$.
\end{proof}

Recall that the generalized Fitting subgroup $F^*(G)$ of a finite group $G$ is the product of the Fitting subgroup $F(G)$ and all subnormal quasisimple subgroups; here a group is quasisimple if it is perfect and its quotient by the centre is a nonabelian simple group. In any finite group $G$ we have $C_G(F^*(G))\leq F^*(G)$. Therefore if $|F^*(G)|=t$, then $|G|\leq t!$.

\begin{lemma} \label{index} Let $G$ be a finite group such that $|x^G|\leq m$ whenever $x\in X(G)$, and let $R=R(G)$ be the soluble radical of $G$. Then the quotient $G/R$ has $m$-bounded order.
\end{lemma}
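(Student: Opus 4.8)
The plan is to pass to the quotient $\overline{G}=G/R$, which has trivial soluble radical, and to bound it through its generalized Fitting subgroup. Since the set $X(\cdot)$ is preserved under quotients, the lemma above gives $X(\overline{G})=\overline{X(G)}$, so the hypothesis is inherited: $|x^{\overline{G}}|\le m$ for every $x\in X(\overline{G})$. As $R(\overline{G})=1$, we have $F(\overline{G})=1$, and hence $F^{*}(\overline{G})$ equals the product of the subnormal quasisimple subgroups of $\overline{G}$. The centre of this product is abelian and characteristic in $F^{*}(\overline{G})$, hence a normal nilpotent subgroup of $\overline{G}$, and so it lies in $F(\overline{G})=1$. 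Therefore each component of $\overline{G}$ is a nonabelian simple group and $S:=F^{*}(\overline{G})$ is a semisimple normal subgroup of $\overline{G}$.

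Next I would check that the hypothesis descends to $S$. Suppose $x\in X(S)$, say $x=[a,b]$ where $Q$ is a Sylow $p$-subgroup of $S$ with $a,x\in Q$ and $b\in S$. Since $S\trianglelefteq\overline{G}$, the subgroup $Q$ is contained in some Sylow $p$-subgroup $P$ of $\overline{G}$; then $a,x\in P$ and $b\in\overline{G}$, so $x\in X(\overline{G})$. Thus $X(S)\subseteq X(\overline{G})$, and consequently $|x^{S}|\le|x^{\overline{G}}|\le m$ for every $x\in X(S)$. Now Lemma \ref{semisimple}, applied to $S$, shows that $|S|$ is $m$-bounded.

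Finally, since $C_{\overline{G}}(F^{*}(\overline{G}))\le F^{*}(\overline{G})=S$, the order of $\overline{G}$ is at most $|S|!$, which is $m$-bounded. This gives the conclusion.

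I do not expect a serious obstacle here: the argument is a reduction that packages the semisimple case (Lemma \ref{semisimple}) together with standard properties of $F^{*}$. The one point deserving care is the inclusion $X(S)\subseteq X(\overline{G})$, which rests on the elementary fact that a Sylow $p$-subgroup of the normal subgroup $S$ sits inside a Sylow $p$-subgroup of $\overline{G}$; this is exactly where the Sylow-theoretic definition of $X(\cdot)$ is used, and it is the reason one works with $X(\cdot)$ rather than with the full set of commutators of prime power order.
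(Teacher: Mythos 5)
Your proof is correct and follows essentially the same route as the paper: reduce to $R=1$, observe that $F^*$ is then semisimple, apply Lemma \ref{semisimple}, and finish with $C_{\overline{G}}(F^*(\overline{G}))\le F^*(\overline{G})$. You merely make explicit two points the paper leaves implicit, namely that the hypothesis passes to the quotient $G/R$ and descends to the subgroup $F^*(\overline{G})$ via $X(S)\subseteq X(\overline{G})$; both verifications are sound.
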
 
\begin{proof}  Without loss of generality we may assume that $R=1$. Then $F^*(G)$ is semisimple and by Lemma \ref{semisimple} the order of $F^*(G)$ is $m$-bounded. Since $|G|\leq|F^*(G)|!$, we immediately deduce that the order of $G$ is $m$-bounded. This proves the lemma.
\end{proof}

We are now ready to complete the proof of Theorem \ref{main}.

\begin{proof}[Proof of Theorem \ref{main}] Recall that $G$ is a finite group in which $|x^G|\leq m$ whenever $x$ is a commutator of prime power order. We wish to prove that $G''$ has $m$-bounded order. 

We will prove the stronger result that if $G$ is a finite group in which $|x^G|\leq m$ whenever $x\in X(G)$, then $G''$ has $m$-bounded order. 

Let $H=G^{(\infty)}$ be the last term of the derived series of $G$. Of course, $H=H'$. Note that $G/H$ is soluble and so by Lemma \ref{soluble} the second commutator subgroup of $G/H$ has $m$-bounded order. Therefore it is sufficient to show that $H$ has $m$-bounded order. Hence, without loss of generality, we assume that $G=H$, that is, $G=G'$.

Let $R=R(G)$ be the soluble radical of $G$. Lemma \ref{index} tells us that $R$ has $m$-bounded index in $G$. Therefore we can use induction on the index $[G:R]$.

If $[G:R]=1$, then $G$ is soluble. Since $G=G'$, it follows that $G=1$ and we are done. So we assume that $[G:R]\geq2$. Suppose $G$ contains a normal subgroup $M$ such that $R\leq M<G$. By induction $|M''|$ is $m$-bounded. Certainly, $M/M''\leq R(G/M'')$. Therefore the index of the soluble radical in $G/M''$ is strictly less than $[G:R]$ and the result follows by induction.

Hence, we can assume that such subgroups $M$ do not exist and so $G/R$ is simple.  Set $X=X(G)$ and observe that, since $G=G'$, it follows that $G$ is generated by $X$. By Lemma \ref{[G,x]} the order of the subgroup $[G,x]$ is $m$-bounded whenever $x\in X$.

Taking into account that $G/R$ is a nontrivial simple group, we conclude that there is $x_0\in X$ such that $[G,x_0]R=G$. Observe that the quotient $G/[G,x_0]$ is soluble. Since $G=G'$, we deduce that $G=[G,x_0]$. Hence, the order of $G$ is $m$-bounded, as required.
\end{proof}

\end{document}